\renewcommand{\Re}{\mathrm{Re}}
\renewcommand{\Im}{\mathrm{Im}}
\renewcommand{\phi}{\varphi}
\newcommand{\quotes}[1]{``#1''}
\newcommand{\same}{\Leftrightarrow}
\theoremstyle{definition}
\newcommand{\defname}{Definition}
\newtheorem{Def}{\defname}
\newcommand{\propname}{Proposition}
\newtheorem{prop}{\propname}
\newcommand{\theoname}{Theorem}
\newtheorem{thm}{\theoname}
\newcommand{\corname}{Corollary}
\newtheorem{Cor}{\corname}
\newcommand{\lemname}{Lemma}
\newtheorem{Lem}{\lemname}
\DeclareMathOperator{\Ker}{Ker}
\newcommand{\set}[1]{\{ #1 \}}
\newcommand{\del}{\partial}
\newcommand{\R}{\mathcal{R}}
\newcommand{\RR}{\mathbf{R}}
\newcommand{\CC}{\mathbf{C}}
\newcommand{\zero}{\mathbf{0}}
\newcommand{\m}{\mathfrak{m}}
\newcommand{\E}{\mathcal{E}}
\newcommand{\Harm}{\mathcal{H}}
\title{Deformations of harmonic function germs}
\author{Yuki Yasuda}
\begin{document}
\maketitle

\begin{abstract} % {{{
We study the classification problem of singularities of function-germs with harmonic leading terms
  of two variables under the right-equivalence.
We observe that the multiple actions of Laplacian appear for the classifications
  of such class of function-germs (\theoname \ \ref{main}).
\end{abstract} % }}}

\section{Introduction} % {{{

%% purpose
In this paper, we study the classification of singularities of function-germs of two variables with
  a non-zero leading term under the right equivalence.

%% Laplacian, Harmonic, f_k, g_k, H_k(R^2)
Recall that a function $h$ on $\RR^n$ is called a harmonic function if it satisfies the Laplace
equation $\Delta h = 0$, where $\Delta = \sum_{j=1}^n \frac{\del^2}{\del x_j^2}$ denotes the
  Laplacian.
In this paper, we study singularities of functions of two variables $x$ and $y$.
Therefore we put $\Delta = \frac{\del^2}{\del x^2} + \frac{\del^2}{\del y^2}$.

We will prove that the multiple actions of Laplacian appear for the classifications
  of function-germs with harmonic leading terms of two variables(\theoname \ \ref{main}).

For example, for all $h_k$ homogeneous harmonic polynomial-germ
  from $(\RR^2,\zero)$ to $(\RR, \zero)$ of degree $k$ and
  $R_{k+1}$ with the condition that two times Laplacian of $R_{k+1}$ vanishes,
  $h_k$ and $h_k + R_{k+1}$ are right equivalent (\corname \ \ref{biharm}).

We proved \theoname \ \ref{main} by some computing in case of
  $\text{(order of the function-germ)} \le 7$ (see \cite{mine}).

Recall that any harmonic function-germ on $(\RR^2, \zero)$ is a real part of
a halomorphic function on $(\CC, 0)$.

In this paper, we use the notation for the harmonic polynomials of special type:
\[
  f_k = \Re (x + iy)^k, \quad
  g_k = \Im (x + iy)^k = \Re (-i(x+iy)^k),
\]
where $i=\sqrt{-1}$ and $\Re$ (resp. $\Im$) means the real (resp. imaginary) part.

Let $\Harm_k(\RR^2)$ denote the vector space of all homogeneous harmonic polynomial-germs
  from $(\RR^2,\zero)$ to $(\RR, \zero)$ of degree $k$.
  It is known that $\Harm_k(\RR^2)$ is spanned by $f_k$ and $g_k$.
  In particular, $\dim_\RR \Harm_k(\RR^2) = 2$.

Also, we recall the right equivalence for the singularities.

%%% right equivalent
We say that $h_j\colon (\RR^m, \zero) \to (\RR^n, \zero) (j=1, 2)$ are right equivalent (written
  $h_1 \sim_{\R} h_2$) if there exists a diffeomorphism-germ $\phi \colon (\RR^m, \zero) \to
  (\RR^m ,\zero)$ such that $h_1\circ \phi = h_2$.

%%% order of Delta == 2
By the main Theorem of this paper(\theoname \ \ref{main}), we obtain following statement:

\begin{Cor} \label{biharm}
Let us $k$ be a natural number more than or equal to 5, non-zero homogeneous harmonic polynomial
  $h_k$ with degree $k$ and
  the function-germ $R$ with order more than $k$ with condition $\Delta^2 R = 0$.
  Then, $h_k$ and $h_k + R$ are right equivalent.
\end{Cor}

%%% k <= 4
Note that if $k\le 4$, then any non-zero harmonic function $h_k$ with degree $k$ is $k$-determined i.e.
  for all $R$ with order more than $k$, $h_k$ and $h_k + R$ are right equivalent
  (see \cite{mine}).

%}}}

\section{Preparations} % {{{

\subsection{Preparations about singularities} % {{{
We recall the basic definitions and  a statement about singularities.

Let $k$ and $n$ be natural numbers in this subsection.

Let $\E_n$ be the set consisting of $C^\infty$ map-germs from $(\RR^n, \zero)$ to $\RR$
  and $\m_n \subset \E_n$ denote the ideal consisting of $C^\infty$ map-germs from $(\RR^n, \zero)$
  to $(\RR, 0)$.

Let $h_1, h_2 \in m_n$.
We call $h_1$ and $h_2$ are $k$-jet equivalent(written $h_1 \sim_{j^k} h_2$) if $h_1 - h_2 \in
\m_n^{k+1}$.

Let $h \in \m_n$.
Then, $J_h$ denotes the Jacobian ideal of $h$ i.e.
  $J_h = \langle \frac{\del h}{\del x_j}| 1 \le j \le n \rangle_{\E_n}$.

We say that a function-germ $h\colon (\RR^k, \zero) \to (\RR, 0)$ is $k$-determined if
  for any $R \in \m_2^{k+1}$, $h$ and $h + R$ are right equivalent.

We recall $k$-determinancy.

\begin{prop}[see \cite{Mother-3}]  \label{judge-determinancy} % m_2^k \subset m_2 J_f + m_2^{k + 1}
Let $h \in \m_n$.
  Suppose for a natural number $k$, $m_n^k \subset m_n J_h + m_n^{k+1}$ holds.

Then $h$ is $k$-determined.
\end{prop}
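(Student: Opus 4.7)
The plan is to use the homotopy method. Given $R \in \m_n^{k+1}$, set $h_t = h + tR$ for $t \in [0,1]$ and seek a smooth one-parameter family of diffeomorphism-germs $\phi_t \colon (\RR^n, \zero) \to (\RR^n, \zero)$ with $\phi_0 = \id$ and $h_t \circ \phi_t = h$ for all $t \in [0,1]$. The endpoint $\phi_1$ will then witness $h+R \sim_{\R} h$. Differentiating $h_t \circ \phi_t = h$ in $t$ and letting $X_t = (X_t^1, \ldots, X_t^n)$ denote the time-dependent vector field generating $\phi_t$ (so $\dot\phi_t = X_t \circ \phi_t$), the problem reduces to solving
\[
  \sum_{j=1}^{n} \frac{\partial h_t}{\partial x_j}\, X_t^j = -R
\]
as germs at $\zero$, with each component $X_t^j \in \m_n$ (so that the flow fixes the origin) and jointly smooth in $t$ (so that the flow exists as a germ on all of $[0,1]$).

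Next I would transfer the hypothesis to every $h_t$. Because $\partial R/\partial x_j \in \m_n^k$, one has $J_h \subset J_{h_t} + \m_n^k$, whence $\m_n J_h \subset \m_n J_{h_t} + \m_n^{k+1}$. Combined with the given $\m_n^k \subset \m_n J_h + \m_n^{k+1}$, this yields $\m_n^k \subset \m_n J_{h_t} + \m_n^{k+1} = \m_n J_{h_t} + \m_n \cdot \m_n^k$ for every $t$. Applying Nakayama's lemma to the finitely generated $\E_n$-module $\m_n^k / \m_n J_{h_t}$, which equals its own $\m_n$-multiple by the above, forces $\m_n^k \subset \m_n J_{h_t}$, and therefore $R \in \m_n^{k+1} \subset \m_n^2 J_{h_t}$. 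For each fixed $t$ this gives coefficients $a_j \in \m_n^2$ with $R = \sum_j a_j\, \partial h_t/\partial x_j$, so $X_t^j = -a_j$ solves the equation pointwise in $t$.

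The main obstacle is arranging that the $a_j$ depend smoothly on $t$. My plan is to rerun the argument in the enlarged local ring $\E_{n+1}$ of germs in $(x,t)$: set $H(x,t) = h_t(x)$, write $\mathfrak{n} = \langle x_1, \ldots, x_n\rangle\, \E_{n+1}$, and let $J_H^x = \langle \partial H/\partial x_j \mid 1 \le j \le n\rangle_{\E_{n+1}}$. Substituting $\partial h/\partial x_j = \partial H/\partial x_j - t\,\partial R/\partial x_j$ into the given identity $\m_n^k \subset \m_n J_h + \m_n^{k+1}$ produces the parametric inclusion $\mathfrak{n}^k \subset \mathfrak{n} J_H^x + \mathfrak{n}^{k+1}$ in $\E_{n+1}$. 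Since $\mathfrak{n}$ is contained in the maximal ideal of $\E_{n+1}$ (hence in its Jacobson radical), Nakayama's lemma applies as before and yields $R \in \mathfrak{n}^2 J_H^x$, producing coefficients $a_j(x,t) \in \mathfrak{n}^2$ that are $C^\infty$ in $t$. Integrating the resulting time-dependent vector field $X_t^j = -a_j(\cdot, t)$, which vanishes at $\zero$ uniformly in $t$, yields the required family $\phi_t$ of germs of diffeomorphisms, and $\phi_1$ completes the proof.
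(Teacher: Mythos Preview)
The paper does not supply its own proof of this proposition; it is simply stated with a citation to Mather and left unproved. Your argument is precisely the classical homotopy (Thom--Levine/Mather) method that the citation points to, and it is correct.

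One small point worth tightening: when you pass to $\E_{n+1}$ as the ring of germs at a single point, the Nakayama conclusion $R \in \mathfrak{n}^2 J_H^x$ a priori produces coefficients $a_j(x,t)$ only for $t$ in a neighbourhood of $0$, not on all of $[0,1]$. The standard fix is to repeat the argument at each basepoint $(0,t_0)$ with $t_0 \in [0,1]$ (your computation $\m_n^k \subset \m_n J_{h_t} + \m_n^{k+1}$ already shows the hypothesis holds at every $t_0$), then use compactness of $[0,1]$ and a partition of unity in $t$ to glue the local solutions into a single smooth family $X_t$. With that routine addition the proof is complete.
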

% }}}

\subsection{Preparations about (poly) harmonic functions } % {{{
First, we will give basic definitions or statements about harmonic functions.

Let $k$ be a natural number and set $r^2 := x^2 + y^2$.

We will prove some properties about $f_k$ and $g_k$.

\begin{prop} \label{harm-prop1}
For all natural number $k$, we have
\[
  f_{k+1} = xf_k - yg_k, \quad
    g_{k+1} = xg_k + yf_k.
\]
\end{prop}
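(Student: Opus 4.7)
The plan is to reduce the two real identities to a single complex identity by exploiting the definitions $f_k = \Re(x+iy)^k$ and $g_k = \Im(x+iy)^k$. The key observation is that $(x+iy)^{k+1} = (x+iy)\cdot(x+iy)^k$, and since $x$ and $y$ are real, the decomposition of $(x+iy)^k$ into real and imaginary parts is transported cleanly through multiplication by $x+iy$.

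First I would write $(x+iy)^k = f_k + i g_k$ directly from the definitions. Then I would compute
\[
  (x+iy)^{k+1} = (x+iy)(f_k + i g_k) = (x f_k - y g_k) + i(x g_k + y f_k),
\]
noting that $x f_k - y g_k$ and $x g_k + y f_k$ are real. Comparing with $(x+iy)^{k+1} = f_{k+1} + i g_{k+1}$ and matching real and imaginary parts then yields both stated equalities simultaneously.

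There is essentially no obstacle here: the identity is just the statement that multiplying a complex number by $x+iy$ acts on its real and imaginary parts by the $2\times 2$ matrix $\begin{pmatrix} x & -y \\ y & x \end{pmatrix}$. The only minor care needed is to verify that the polynomials in question are real-valued so that the real/imaginary decomposition is unique; this is immediate since $f_k, g_k \in \RR[x,y]$ by construction.
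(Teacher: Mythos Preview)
Your proof is correct and follows essentially the same approach as the paper: both arguments use the identity $z^{k+1} = z \cdot z^k$ with $z = x+iy$ and then separate real and imaginary parts. You have simply written out the multiplication explicitly, while the paper leaves that step to the reader.
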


\begin{proof}
Let $z = x + iy$. By taking real part or imaginary part of $z^{k+1} = z \cdot z^k$, we have the
  \propname \ \ref{harm-prop1}.
\end{proof}

\begin{prop} \label{harm-prop2}
Let $s$ and $k$ be two non-zero integers such that $s \le k$.
Then
\begin{align*}
  f_sr^{2(k-s)} &= f_kf_{k-s} + g_kg_{k-s}, \\
  g_sr^{2(k-s)} &= g_kf_{k-s} + f_kg_{k-s}.
\end{align*}
\end{prop}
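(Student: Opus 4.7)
The plan is to package both identities into a single complex computation. Setting $z = x + iy$, the definitions recalled in the paper give $z^k = f_k + i g_k$, and the key observation is that $r^2 = x^2 + y^2 = z \bar z$. Everything then reduces to evaluating the product $z^k\,\overline{z^{k-s}}$ in two different ways and comparing real and imaginary parts.

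On the one hand, $z^k\,\overline{z^{k-s}} = z^k \bar z^{\,k-s} = z^s (z \bar z)^{k-s} = z^s r^{2(k-s)}$. Since $r^{2(k-s)}$ is a real scalar, the real and imaginary parts of this expression are exactly $f_s r^{2(k-s)}$ and $g_s r^{2(k-s)}$. On the other hand, direct expansion gives
\[
(f_k + ig_k)(f_{k-s} - ig_{k-s}) = (f_k f_{k-s} + g_k g_{k-s}) + i\,(g_k f_{k-s} - f_k g_{k-s}).
\]
Matching real parts yields the first identity of the proposition immediately.

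Matching imaginary parts produces $g_s r^{2(k-s)} = g_k f_{k-s} - f_k g_{k-s}$, which differs in sign from the statement as displayed. A check with $s=1$, $k=2$ (where the left side equals $y(x^2+y^2) = x^2 y + y^3$) confirms that the minus sign is the correct one, so I would flag the apparent typo and record the corrected formula before proceeding. An alternative route, more in keeping with the preceding \propname \ \ref{harm-prop1}, is induction on $k$ for fixed $s$: the base case $k = s$ reduces both sides to $f_s$ and $g_s$, and the inductive step follows by multiplying the hypothesis by $r^2 = x^2 + y^2$ and invoking the recursions $f_{k+1} = x f_k - y g_k$ and $g_{k+1} = x g_k + y f_k$, whose cross-terms cancel in exactly the same pattern as in the complex argument. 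I do not foresee any real obstacle; the whole content of the proposition is the one-line identity $z^k \bar z^{\,k-s} = z^s r^{2(k-s)}$, with the minor wrinkle that the sign in the second formula needs correcting.
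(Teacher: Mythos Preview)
Your approach is exactly the paper's: it too sets $z = x+iy$ and reads off real and imaginary parts of the identity $r^{2(k-s)}z^s = z^k\bar z^{\,k-s}$. Your sign correction in the second formula is right, and in fact the paper silently uses the corrected version $g_s r^{2(k-s)} = g_k f_{k-s} - f_k g_{k-s}$ when it later invokes this proposition in the proof of \propname~\ref{pshk}.
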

\begin{proof}
Let $z = x + iy$.
By taking real part or imaginary part of $r^{2(k-s)}z^s = z^k\bar{z}^{k-s}$, we have the
  Proposition.
\end{proof}

\begin{Def}
Let $s$ be a natural number.
Let $\Omega \subset \RR^n$ be a open set and $u\colon \Omega \to \RR$ be a $C^2$ function.

Then, we say that $u$ is a harmonic function of order $s$ if $u$ satisfies the following equation:
\[
  \Delta^s u = 0.
\]
\end{Def}

Following \propname gives the expansion of harmonic functions of order $s$.
For the proof, see \cite{polyharm}.

\begin{prop}[Almansi Expansion] \label{almansi}
Let us $\Omega \subset \RR^n$ be a star domain and $u\colon \Omega \to \RR$ be a
  harmonic function of order $s$.

Then, there exists $s$ polyharmonic functions $h_0, h_1, \ldots, h_{s-1}$ on $\Omega$
such that
\[
  u = \sum_{j=0}^{s-1} r^{2s}h_j.
\]
\end{prop}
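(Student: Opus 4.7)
The plan is induction on the polyharmonic order $s$. The base case $s=1$ is immediate: if $\Delta u = 0$ I simply set $h_0 := u$. For the inductive step, assume the expansion has been established for order $s-1$ and let $u$ satisfy $\Delta^s u = 0$ on the star domain $\Omega$. Since $\Delta^{s-1}(\Delta u) = 0$, the inductive hypothesis applied to $\Delta u$ produces harmonic $k_0, \ldots, k_{s-2}$ with $\Delta u = \sum_{j=0}^{s-2} r^{2j} k_j$. Once I have harmonic $h_1, \ldots, h_{s-1}$ with $\Delta\bigl(\sum_{j=1}^{s-1} r^{2j} h_j\bigr) = \Delta u$, the definition $h_0 := u - \sum_{j=1}^{s-1} r^{2j} h_j$ automatically yields a harmonic $h_0$, completing the expansion.

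The core lemma to establish is the following Laplacian-inversion statement: given a harmonic $h$ on $\Omega$ and an integer $j \geq 0$, I construct a harmonic $\tilde h$ on $\Omega$ with $\Delta(r^{2(j+1)} \tilde h) = r^{2j} h$. By the Leibniz rule together with $\Delta r^{2(j+1)} = 2(j+1)(2j+n) r^{2j}$, one expands
\[
  \Delta\bigl(r^{2(j+1)} \tilde h\bigr) = 2(j+1)(2j+n) r^{2j} \tilde h + 4(j+1) r^{2j} (x \cdot \nabla \tilde h) + r^{2(j+1)} \Delta \tilde h.
\]
I would try the radial ansatz $\tilde h(x) := c \int_0^1 t^b\, h(tx)\, dt$. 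Harmonicity of $h$ gives $\Delta \tilde h = 0$ directly, and integration by parts applied to the identity $\frac{d}{dt}[h(tx)] = x \cdot \nabla h(tx)$ yields $x \cdot \nabla \tilde h = c\, h - c(b+1) \tilde h$. Matching coefficients in the expansion above then forces $b = j + n/2 - 1$ and $c = 1/\bigl(4(j+1)\bigr)$; the star-shaped hypothesis on $\Omega$ is exactly what guarantees $tx \in \Omega$ for every $t \in [0,1]$, so that the integrals make sense on all of $\Omega$.

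With this lemma in hand, applying it to each term $r^{2j} k_j$ produces a harmonic $h_{j+1}$ satisfying $\Delta(r^{2(j+1)} h_{j+1}) = r^{2j} k_j$ for $j = 0, \ldots, s-2$. Summation and reindexing give $\Delta\bigl(\sum_{m=1}^{s-1} r^{2m} h_m\bigr) = \Delta u$, and the induction closes as described in the first paragraph. The main obstacle is the coefficient-matching inside the key lemma: the unwanted $\tilde h$-term in the Leibniz expansion must cancel exactly, which is what pins down the exponent $b = j + n/2 - 1$ and the normalization $c = 1/(4(j+1))$. Without the star-shaped assumption, the radial integrals defining $\tilde h$ only converge locally and one recovers only a local version of the expansion, so the global statement genuinely relies on the geometric hypothesis on $\Omega$.
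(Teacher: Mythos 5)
Your argument is correct and essentially complete, but it is worth noting that the paper does not prove this proposition at all: it simply defers to Aronszajn--Creese--Lipkin \cite{polyharm}. So what you have supplied is a genuine, self-contained proof where the paper offers only a citation. Your route is the standard one: induct on $s$, apply the inductive hypothesis to $\Delta u$, and reduce everything to the Laplacian-inversion lemma producing a harmonic $\tilde h$ with $\Delta(r^{2(j+1)}\tilde h)=r^{2j}h$ via the radial ansatz $\tilde h(x)=c\int_0^1 t^b h(tx)\,dt$. The computation checks out: $\Delta r^{2(j+1)}=2(j+1)(2j+n)r^{2j}$ is right, harmonicity of $\tilde h$ follows from $\Delta_x h(tx)=t^2(\Delta h)(tx)=0$, and the coefficient matching does pin down $b=j+n/2-1$ and $c=1/(4(j+1))$; your identification of the star-shaped hypothesis as exactly what makes $\tilde h$ globally defined is the right remark. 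One small slip: the Euler-type identity should read $x\cdot\nabla\tilde h = c\,h-(b+1)\tilde h$ (no factor of $c$ on the second term, since $\tilde h=cF$ already absorbs it); your stated values of $b$ and $c$ are the ones forced by this corrected identity, so this is evidently a transcription error rather than a flaw in the derivation. Note also that your proof establishes the expansion in the form it is actually used later in the paper, namely $u=\sum_{j=0}^{s-1}r^{2j}h_j$ with each $h_j$ \emph{harmonic}; the statement as printed ($r^{2s}$ in place of $r^{2j}$, and \quotes{polyharmonic} in place of \quotes{harmonic}) is garbled, and your version is the correct one.
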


Let $P_k$ be the set of all homogeneous polynomials(not necessarily harmonic polynomials) of degree $k$ of two variables.

\begin{prop} \label{phharm}
See \cite{pshk}.
Let $k$ be a natural number more than or equal to 2.
Then
\[
  P_k = \Harm_k(\RR^2) \oplus r^2P_{k-2}.
\]
\end{prop}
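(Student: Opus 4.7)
The plan is a dimension count combined with a verification that the intersection $\Harm_k(\RR^2) \cap r^2 P_{k-2}$ is trivial.

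First, I would compute the relevant dimensions: $\dim P_k = k+1$; $\dim \Harm_k(\RR^2) = 2$, as noted in the introduction; and $\dim r^2 P_{k-2} = \dim P_{k-2} = k-1$, since multiplication by the nonzero polynomial $r^2$ is injective on $P_{k-2}$. The equality $2 + (k-1) = k+1$ means that once the intersection is shown to be zero, the sum is automatically direct and fills all of $P_k$ by dimension.

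For the triviality of the intersection, I would pass to the complex coordinates $z = x + iy$ and $\bar z = x - iy$ already used in the paper. After complexification, every element of $P_k$ has a unique expansion $\sum_{j=0}^{k} c_j z^j \bar z^{k-j}$. Because $\Delta = 4\,\del_z \del_{\bar z}$, we have $\Delta(z^j \bar z^{k-j}) = 4 j (k-j) z^{j-1} \bar z^{k-j-1}$, which vanishes precisely when $j \in \{0, k\}$. Hence the complexification of $\Harm_k(\RR^2)$ is spanned by $z^k$ and $\bar z^k$, while the complexification of $r^2 P_{k-2}$ is spanned by $\{z^j \bar z^{k-j}\}_{1 \le j \le k-1}$, since $r^2 = z \bar z$. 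These two monomial sets are disjoint, so the complex intersection is zero, and hence so is the real intersection.

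The main subtlety---and really the only obstacle---is ensuring that the complex argument descends to the real case. This is automatic because both $\Harm_k(\RR^2)$ and $r^2 P_{k-2}$ are defined over $\RR$ and therefore stable under complex conjugation; the decomposition over $\CC$ respects the real structure and so restricts to a decomposition over $\RR$. A purely real alternative would be to argue inductively that if $r^2 Q \in \Harm_k(\RR^2)$ with $Q \in P_{k-2}$, then Euler's identity applied to $Q$ yields $\Delta(r^2 Q) = 4(k-1)Q + r^2 \Delta Q = 0$, forcing $Q \in r^2 P_{k-4}$ and allowing a descent on degree until $Q$ becomes a constant or linear polynomial, where direct computation shows $Q = 0$; but the complex viewpoint is considerably cleaner.
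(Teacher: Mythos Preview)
Your argument is correct. The dimension count is right, and the complex-variable verification that $\Harm_k(\RR^2)\cap r^2P_{k-2}=\{0\}$ is clean and complete; the descent to $\RR$ via conjugation-stability is the right way to justify it. The alternative real computation $\Delta(r^2Q)=4(k-1)Q+r^2\Delta Q$ is also correct and would work as stated.

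As for comparison with the paper: there is nothing to compare against. The paper does not prove this proposition; it simply records the statement and refers the reader to Axler--Bourdon \cite{pshk} for a proof. So your write-up actually supplies more than the paper does. For what it is worth, the argument in \cite{pshk} is essentially the same as yours in spirit: one shows the sum is direct (there via an integral/orthogonality argument using the surface measure on the sphere, which in two variables amounts to your observation that the $z,\bar z$ monomial supports are disjoint) and then counts dimensions. Your complex-coordinate version is the natural specialization of that argument to $n=2$.
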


We consider polyharmonic polynomials of order $s$.

Let us analyze $P_s\Harm_k(\RR^2) = \set{p_1f_k + p_2g_k; p_1, p_2 \in P_s}$.

\begin{Def}
Let $k$ and $s$ be two natural numbers such that $k \ge 2s$.
Then, we define $\Delta_k^s\colon P_k \to P_{k-2s}$ as
\begin{align*}
  \Delta_k^s = \underbrace{\Delta \Delta \cdots \Delta}_{s}.
\end{align*}
\end{Def}

\begin{prop} \label{pshk1}
Let $k$ and $s$ be two natural numbers such that $k \ge 2s$. Then
\[
  \Ker \Delta_k^s \supset P_{s-1}\Harm_{k-s+1}(\RR^2).
\]
\end{prop}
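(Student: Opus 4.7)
The plan is to pass to complex coordinates, where the Laplacian factors and the whole claim collapses to a monomial degree count. Setting $z = x+iy$ and $\bar z = x-iy$, one has $\Delta = 4\,\del_z\del_{\bar z}$ and therefore $\Delta^s = 4^s\,\del_z^s\del_{\bar z}^s$. Since $\Delta^s$ is $\RR$-linear, I would complexify both $P_{s-1}$ and $\Harm_{k-s+1}(\RR^2)$ and check the vanishing on convenient $\CC$-bases.

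First I would identify such bases. The monomials $z^m$ and $\bar z^m$ are harmonic (because $\del_{\bar z}z^m = 0 = \del_z\bar z^m$), and from $f_m = \tfrac12(z^m+\bar z^m)$, $g_m = \tfrac{1}{2i}(z^m-\bar z^m)$ these two monomials $\CC$-span the complexification of $\Harm_m(\RR^2)$. Similarly, the complexification of $P_{s-1}$ has the monomial basis $\{z^a\bar z^b : a+b = s-1,\ a,b\ge 0\}$. Consequently the complexification of $P_{s-1}\Harm_{k-s+1}(\RR^2)$ is $\CC$-spanned by the products
\[
  z^{a+k-s+1}\bar z^b \quad\text{and}\quad z^a\bar z^{b+k-s+1}, \qquad a+b=s-1.
\]

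Next I would apply $\Delta^s = 4^s\,\del_z^s\del_{\bar z}^s$ to each basis element. In the first type of term, the exponent of $\bar z$ is $b\le s-1$, so $\del_{\bar z}^s$ annihilates it; in the second, the exponent of $z$ is $a\le s-1$, so $\del_z^s$ annihilates it. By linearity $\Delta^s$ then vanishes on all of $P_{s-1}\Harm_{k-s+1}(\RR^2)$, which is exactly the claim.

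The only real "obstacle" is the initial conceptual move of invoking $\Delta = 4\,\del_z\del_{\bar z}$; once that is in hand, everything reduces to the trivial inequality $s-1 < s$. A purely real proof by induction on $s$ from the product rule $\Delta(pu) = (\Delta p)u + 2\nabla p\cdot\nabla u + p\,\Delta u$ together with \propname\ \ref{harm-prop1} is possible, but substantially more cumbersome, so I would keep the complex-variable argument in the main text.
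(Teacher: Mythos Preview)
Your argument is correct and genuinely different from the paper's. The paper proves the inclusion by induction on $s$: for the inductive step it computes $\Delta(x^{s_0-j}y^jf_{k-s_0})$ directly via the product rule, obtaining four terms, and then uses \propname~\ref{harm-prop1} to rewrite factors like $x^{s_0-j-2}y^jf_{k-s_0}$ so that everything visibly lies in $P_{s_0-1}\Harm_{k-s_0-1}(\RR^2)$, which is contained in $\Ker\Delta_{k-2}^{s_0}$ by the induction hypothesis. This is exactly the ``purely real proof by induction \ldots together with \propname~\ref{harm-prop1}'' that you mention at the end as the more cumbersome alternative. Your route---complexify, factor $\Delta^s = 4^s\del_z^s\del_{\bar z}^s$, and observe that every spanning monomial has $z$-degree or $\bar z$-degree at most $s-1$---is shorter and makes the mechanism transparent; in particular it shows that the hypothesis $k\ge 2s$ is not actually needed here (only $k\ge s$ so that $\Harm_{k-s+1}$ makes sense). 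The paper's version, by contrast, stays entirely over $\RR$ and reuses the recursion of \propname~\ref{harm-prop1}, which keeps the exposition internally self-contained at the cost of a longer calculation.
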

\begin{proof}
We will prove \propname \ \ref{pshk1} by induction on $s$.

If $s=1$, this Proposition is clearly.

Assume that this Proposition is correct for $s=1, 2, \ldots, s_0$.
We want to prove that
\[
  \Ker \Delta_k^{s_0+1} \supset P_{s_0}\Harm_{k-s_0}(\RR^2)
\]
i.e.
\begin{align*}
& x^{s_0-j}y^jf_{k-s_0}, x^{s_0-j}y^jg_{k-s_0} \in \Ker \Delta_k^{s_0+1}
  (\forall j \in \set{0, 1, 2, \ldots, s_0}) \\
& \same \Delta(x^{s_0-j}y^j f_{k-s_0}), \Delta(x^{s_0-j}y^j g_{k-s_0})
  \in \Ker \Delta_{k-2}^{s_0} \supset P_{s_0-1} \Harm_{k-s_0-1}(\RR^2)
  (\forall j \in \set{0, 1, 2, \ldots, s_0}).
\end{align*}

By direct computing, we have
\begin{align*}
\Delta(x^{s_0-j}y^jf_{k-s_0})
  &= 2(k-s_0)(s_0-j)x^{s_0-j-1}y^j g_{k-s_0-1} - 2j(k-s_0)x^{s_0-j}y^{j-1}f_{k-s_0-1} \\
  &\quad + 2(s_0-j)(s_0-j-1)x^{s_0-j-2}y^jf_{k-s_0} + j(j-1)x^{s_0-j}y^{j-2}f_{k-s_0}.
\end{align*}
Furthermore
\begin{align*}
x^{s_0-j-2}y^jf_{k-s_0}
  &= x^{s_0-j-2}y^j(xf_{k-s_0-1} - yg_{k-s_0-1}) \\
  &= x^{s_0-j-1}y^jf_{k-s_0-1} - x^{s_0-j-2}y^{j+1}g_{k-s_0-1} \in P_{s_0-1}\Harm_{k-s_0-1}(\RR^2).
\end{align*}
Similarly,
\[
  x^{s_0-j}y^{j-2}f_{k-s_0} \in P_{s_0-1}\Harm_{k-s_0-1}(\RR^2).
\]
Thus,
\[
  \Delta(x^{s_0-j}y^jf_{k-s_0}) \in P_{s_0-1}\Harm_{k-s_0-1}(\RR^2).
\]
Similarly, we obtain
\[
  \Delta(x^{s_0-j}y^j g_{k-s_0}) \in P_{s_0-1}\Harm_{k-s_0-1}(\RR^2).
\]
\end{proof}

\begin{prop} \label{pshk}
Let $k$ be a natural number and $s$ be a integer more than or equal to 0.
Then
\[
P_s\Harm_k(\RR^2)
  = \begin{cases}
    \Ker \Delta_{s+k}^{s+1} & (s < k-1) \\
    P_{s+k} & (s \ge k-1)\\
  \end{cases}.
\]
\end{prop}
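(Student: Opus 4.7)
The inclusion $P_s \Harm_k(\RR^2) \subset \Ker \Delta_{s+k}^{s+1}$ in the first case is given directly by Proposition \ref{pshk1} (applied with the substitutions $k \mapsto s+k$, $s \mapsto s+1$), so only the reverse inclusions require real work. The heart of the argument is the following key inclusion: for each $j$ with $0 \le j \le s$,
\begin{equation*}
r^{2j}\, \Harm_{s+k-2j}(\RR^2) \subset P_s \Harm_k(\RR^2).
\end{equation*}
I would prove this using complex coordinates $z = x+iy$. Writing $f_m = (z^m + \bar{z}^m)/2$, $g_m = (z^m - \bar{z}^m)/(2i)$, and $r^{2j} = z^j \bar{z}^j$, a direct computation (where the hypothesis $j \le s$ is used to ensure $s-j \ge 0$) gives
\begin{equation*}
r^{2j} f_m = \Re(z^{s-j}\bar{z}^j)\, f_k - \Im(z^{s-j}\bar{z}^j)\, g_k
\end{equation*}
for $m = s+k-2j$, and a similar formula for $r^{2j} g_m$. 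Both coefficients $\Re(z^{s-j}\bar{z}^j)$ and $\Im(z^{s-j}\bar{z}^j)$ lie in $P_s$.

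With this key inclusion in hand, Case 1 ($s < k-1$) follows via the Almansi expansion. Any $p \in \Ker \Delta_{s+k}^{s+1}$ is a polynomial polyharmonic of order $s+1$, so Proposition \ref{almansi} yields $p = \sum_{j=0}^{s} r^{2j} h_j$ with each $h_j$ harmonic. Homogeneity of $p$ forces $h_j \in \Harm_{s+k-2j}(\RR^2)$, and the degrees $s+k-2j \ge k-s \ge 2$ are strictly positive throughout the range $0 \le j \le s$. The key inclusion then places each summand $r^{2j} h_j$ in $P_s \Harm_k(\RR^2)$, giving the reverse inclusion.

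For Case 2 ($s \ge k-1$), the inclusion $P_s \Harm_k(\RR^2) \subset P_{s+k}$ is trivial. Iterating Proposition \ref{phharm} gives the decomposition
\begin{equation*}
P_{s+k} = \bigoplus_{j=0}^{\lfloor (s+k)/2 \rfloor} r^{2j}\, \Harm_{s+k-2j}(\RR^2),
\end{equation*}
and the assumption $s \ge k-1$ is exactly what ensures $\lfloor (s+k)/2 \rfloor \le s$, so every summand is covered by the key inclusion. The main obstacle is proving the key inclusion cleanly; the complex-coordinate identity above is the most transparent route, though one could alternatively derive it by iterating the identities in Proposition \ref{harm-prop2}.
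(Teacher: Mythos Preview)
Your argument is correct and follows the same overall architecture as the paper's proof: the forward inclusion via Proposition~\ref{pshk1}, and the reverse inclusion by decomposing the target space into pieces of the form $r^{2j}\Harm_{s+k-2j}(\RR^2)$ and showing each piece lies in $P_s\Harm_k(\RR^2)$.

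Where you differ is in execution. The paper obtains the key inclusion in two steps, first using Proposition~\ref{harm-prop2} to get $r^{2j}\Harm_{s+k-2j}(\RR^2)\subset P_j\Harm_{s+k-j}(\RR^2)$ and then (implicitly via Proposition~\ref{harm-prop1}) $P_j\Harm_{s+k-j}(\RR^2)\subset P_s\Harm_k(\RR^2)$; your single complex-coordinate identity $r^{2j}f_m=\Re(z^{s-j}\bar z^{\,j})f_k-\Im(z^{s-j}\bar z^{\,j})g_k$ collapses both steps at once. More substantially, for the range $s\ge k-1$ the paper treats $s=k-1$ separately and then splits $s>k-1$ by the parity of $s-k+1$, reducing each subcase to a previously established one; you instead iterate Proposition~\ref{phharm} to write $P_{s+k}=\bigoplus_{j=0}^{\lfloor(s+k)/2\rfloor}r^{2j}\Harm_{s+k-2j}(\RR^2)$ and observe that $s\ge k-1$ is exactly the bound making $\lfloor(s+k)/2\rfloor\le s$, so the key inclusion applies uniformly. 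This is genuinely cleaner and makes transparent \emph{why} the threshold is $s=k-1$. One small point to tidy: when $s+k$ is even the top summand has $m=0$, so you are tacitly taking $\Harm_0(\RR^2)=\RR$; your identity still holds there (it gives $r^{s+k}=\Re(z^{(s-k)/2}\bar z^{(s+k)/2})f_k-\Im(z^{(s-k)/2}\bar z^{(s+k)/2})g_k$, and $s\ge k$ in this subcase), but it is worth saying explicitly since the paper's convention sends germs to $(\RR,0)$.
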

\begin{proof}
First, we will prove the case of $s < k-1$.
By \propname \ \ref{almansi} and \propname \ \ref{pshk1}, we obtain that
\[
P_s\Harm_k(\RR^2) \subset \Ker \Delta_{s+k}^{s+1}
  = \bigoplus_{j=0}^s r^{2j}\Harm_{s+k-2j}(\RR^2).
\]
We want to prove that $P_s\Harm_k(\RR^2) \supset \Ker\Delta_{s+k}^{s+1}
= \bigoplus_{j=0}^s r^{2j}\Harm_{s+k-2j}(\RR^2)$.

For all $j \in \set{0, 1, 2, \ldots, s}$, we have $P_j\Harm_{s+k-j}(\RR^2) \subset P_s\Harm_k(\RR^2)$.
Also, by \propname \ \ref{harm-prop2},
\begin{align*}
r^{2j}f_{s+k-2j} &= f_jf_{s+k-j} + g_jg_{s+k-j}, \\
r^{2j}g_{s+k-2j} &= -g_jf_{s+k-j} + f_jg_{s+k-j}.
\end{align*}
Thus, $r^{2j}\Harm_{s+k-2j}(\RR^2) \subset P_j\Harm_{s+k-j}(\RR^2) \subset P_s\Harm_k(\RR^2)$.

Next, we will prove the case of $s=k-1$.
By the first statement of \propname \ \ref{pshk},
\begin{align*}
P_{k-1}\Harm_k(\RR^2)
  &\supset P_{k-2}\Harm_{k+1}(\RR^2) \\
  &= \Ker \Delta_{2k-1}^{k-1} \\
  &= \Harm_{2k-1}(\RR^2) \oplus r^2\Harm_{2k-3}(\RR^2) \oplus
    \cdots \oplus r^{2(k-2)}\Harm_3(\RR^2).
\end{align*}
Also, by \propname \ \ref{phharm}, we have
\[
  P_{2k-1} = \Harm_{2k-1}(\RR^2) \oplus r^2\Harm_{2k-3}(\RR^2) \oplus \cdots
    \oplus r^{2(k-2)}\Harm_3(\RR^2) \oplus r^{2(k-1)}\Harm_1(\RR^2).
\]
Furthermore, by \propname \ \ref{harm-prop2}, we have
\begin{align*}
  xr^{2(k-1)} &= f_{k-1}f_k + g_{k-1}g_k, \\
  yr^{2(k-1)} &= -g_{k-1}f_k + f_{k-1}g_k.
\end{align*}

We will prove in the case of $s > k-1$.

If $s-k+1$ is even,
\begin{align*}
  P_s\Harm_k(\RR^2)
    &\supset P_{s-\frac{s-k+1}{2}}\Harm_{k+\frac{s-k+1}{2}} \\
    &= P_{\frac{s+k-1}{2}}\Harm_{\frac{s+k+1}{2}} = P_{s+k}.
\end{align*}

If $s-k+1$ is odd,
\begin{align*}
  P_s\Harm_k(\RR^2)
    &\supset P_{s-\frac{s-k}{2}} \Harm_{k + \frac{s-k}{2}}(\RR^2) \\
    &= P_{\frac{s+k}{2}}\Harm_{\frac{s+k}{2}}.
\end{align*}
Let $l = \frac{s+k}{2}$. We want to prove that $P_l\Harm_l = P_{2l}$.
However, we have
\begin{align*}
P_l\Harm_l(\RR^2)
  &\supset P_{l-1}\Harm_{l+1}(\RR^2) \\
  &= \Ker \Delta_{2l}^l \\
  &= \Harm_{2l}(\RR^2) \oplus r^2\Harm_{2l-2}(\RR^2) \oplus \cdots \oplus r^{2(l-1)}\Harm_2(\RR^2)
\end{align*}
and $r^{2l} = f_lf_l + g_lg_l \in P_l\Harm_l$.
\end{proof}
%}}}

To conclude the section on harmonic functions, we classify all non-zero harmonic
functions:

\begin{prop} \label{classify-harm} %{{{
Let $h$ be a non-zero harmonic function of order $k$.
Then, $h$ is right equivalent to $f_k$.
\end{prop}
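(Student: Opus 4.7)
The plan is to exploit the fact that any harmonic germ on $(\RR^2,\zero)$ is the real part of a holomorphic germ, and then to extract a complex $k$-th root of that holomorphic germ and reinterpret it as a real coordinate change.

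First I would produce a holomorphic lift. Since $h$ is harmonic near $\zero$ with $h(\zero)=0$, it admits a harmonic conjugate $\tilde h$, which I normalize by $\tilde h(\zero)=0$. Then $F:=h+i\tilde h$ is a holomorphic germ $(\CC,0)\to(\CC,0)$ with $\Re F=h$. The hypothesis that $h$ has order $k$ means that its degree-$k$ homogeneous part is a nonzero element of $\Harm_k(\RR^2)$ (the leading homogeneous piece of a harmonic germ is itself harmonic), and since the real part of a nonzero complex monomial $c z^j$ is a nonzero real harmonic polynomial of degree $j$, the Taylor series of $F$ at $0$ begins with a nonzero term of degree $k$:
\[
  F(z)=c_k z^k+c_{k+1}z^{k+1}+\cdots,\qquad c_k\ne 0.
\]

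Second, I would take a holomorphic $k$-th root. Writing $F(z)=z^k G(z)$ with $G$ holomorphic and $G(0)=c_k\ne 0$, the germ $G$ is nowhere zero on a small disk, so a single-valued holomorphic branch $G(z)^{1/k}$ exists. Put $w(z):=z\,G(z)^{1/k}$; then $F(z)=w(z)^k$ and $w'(0)=G(0)^{1/k}\ne 0$, so $w$ is a biholomorphism of a neighborhood of $0$ in $\CC$.

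Third, I would convert $w$ into a real diffeomorphism. Define $\phi\colon(\RR^2,\zero)\to(\RR^2,\zero)$ by $\phi(x,y):=(\Re w(x+iy),\,\Im w(x+iy))$. Because $w$ is holomorphic with $w'(0)\ne 0$, the Cauchy--Riemann equations give $\det d\phi(\zero)=|w'(0)|^2\ne 0$, so $\phi$ is a diffeomorphism-germ. Then
\[
  h(x,y)=\Re F(z)=\Re\bigl(w(z)^k\bigr)=f_k\bigl(\Re w(z),\Im w(z)\bigr)=f_k\circ\phi(x,y),
\]
which is precisely $h\sim_{\R}f_k$. No step is especially delicate; the only point to be careful about is verifying that the degree-$k$ part of $h$ really is a nonzero element of $\Harm_k(\RR^2)$, forcing $c_k\ne 0$. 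Once that is in hand, the argument is a straightforward complex-analytic ``$k$-th root trick''.
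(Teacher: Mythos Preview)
Your proposal is correct and follows essentially the same approach as the paper: lift $h$ to a holomorphic germ, factor out $z^k$, extract a holomorphic $k$-th root to produce a biholomorphism $\phi$, and conclude $f_k\circ\phi=h$ by taking real parts. Your write-up is in fact more careful than the paper's, since you explicitly justify why the leading coefficient $c_k$ is nonzero and why $\phi$ is a genuine diffeomorphism-germ, points the paper leaves implicit.
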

\begin{proof}
Since $h$ is a harmonic function, there exists a complex valued function $u\colon (\CC, 0) \to
(\CC, 0)$ such that
\[
  \Re \, u = h.
\]

Since order of $u$ is equal to $k$, there exists a complex valued function $\tilde{u}\colon (\CC, 0) \to
\CC$ such that $u = z^k\tilde{u}$ and $\tilde{u}(0) \neq 0$.

We define a local diffeomorphism $\phi\colon (\RR^2, \zero) \sim (\CC, 0) \to (\CC, 0) \sim
  (\RR^2, \zero)$ as
\[
  \phi(z) = z\sqrt[k]{\tilde{u}(z)}.
\]
Furthermore, we define $u_0\colon (\RR^2, \zero) \sim (\CC, 0) \to (\CC, 0) \sim (\RR^2, \zero)$
as
\[
  u_0(z) = z^k.
\]
Then, we obtain
\[
  u_0 \circ \phi = u.
\]
Thus, by taking real part of above equation, we obtain
\[
    f_k \circ \phi = h.
\]
\end{proof}
%}}}

% }}}

\section{Main Results And Proofs} % {{{
\begin{thm} \label{main} % {{{
Let $k$ be a natural number more than or equal to 5 and let $h_k$ be a
  non-zero homogeneous harmonic polynomial with degree $k$.
Furthermore, for all $s \in \set{1, 2, \ldots, k-4}$, we take $\rho_{k+s} \in \Ker
  \Delta_{k+s}^{\sigma_s}$ where $\sigma_s$ is defined by
\[
  \sigma_s = \begin{cases}
    s + 1 & (s < \frac{k-3}{2}) \\
    s + 2 & (s \ge \frac{k-3}{2})
  \end{cases}.
\]
Then, for all $\rho_{k+s} \in \Ker \Delta_{k+s}^{\sigma_s}$ and $ R_{2k-3} \in \m_2^{2k-3}$,
  we have $h_k$ and $h_k + \sum_{s=1}^{k-4}\rho_{k+s} + R_{2k-3}$ are right equivalent.
\end{thm}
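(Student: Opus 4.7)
The plan begins by reducing to the case $h_k = f_k$: any non-zero homogeneous harmonic polynomial of degree $k$ has the form $\Re[cz^k]$ for some $c\in\CC\setminus\{0\}$ with $z=x+iy$, and the linear holomorphic change $z\mapsto c^{1/k}z$ has constant Jacobian and thus preserves every polyharmonic condition $\Delta^{\sigma_s}\rho_{k+s}=0$ monomial-by-monomial. With $h_k=f_k$, Proposition~\ref{pshk} gives $(\m_2 J_{f_k})_{k+s} = P_{s+1}\Harm_{k-1}(\RR^2)$, which equals $P_{k+s}$ as soon as $s\ge k-3$; by Proposition~\ref{judge-determinancy}, $f_k$ is therefore $(2k-3)$-determined. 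Hence any residue in $\m_2^{2k-2}$ is absorbable, and it suffices to build a diffeomorphism that kills $\sum_{s=1}^{k-4}\rho_{k+s}$ together with the homogeneous degree-$(2k-3)$ component of $R_{2k-3}$.

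I build this diffeomorphism as a composition $\phi_{k-3}\circ\cdots\circ\phi_1$ with $\phi_s = \id+\psi_{s+1}$ and $\psi_{s+1}=(\psi_{s+1}^{(1)},\psi_{s+1}^{(2)})\in(P_{s+1})^2$. Setting $w_{s+1}:=\psi_{s+1}^{(1)}+i\psi_{s+1}^{(2)}$, the linear Taylor contribution of $f_k\circ\phi_s$ at degree $k+s$ is
\[
  \nabla f_k\cdot\psi_{s+1}\;=\;k\,\Re[z^{k-1}w_{s+1}],
\]
and by Proposition~\ref{pshk} the image of $w\mapsto\Re[z^{k-1}w]$ on complex polynomials of degree $s+1$ equals $P_{s+1}\Harm_{k-1}(\RR^2)=\Ker\Delta_{k+s}^{s+2}$. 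Since $\sigma_s\le s+2$, the hypothesis on $\rho^{(s-1)}_{k+s}$ makes $\nabla f_k\cdot\psi_{s+1} = -\rho^{(s-1)}_{k+s}$ uniquely solvable at each step $s=1,\dots,k-3$; the final step absorbs the degree-$(2k-3)$ component of $R_{2k-3}$, whose target space $P_{2k-3}=P_{k-2}\Harm_{k-1}(\RR^2)$ is again given by Proposition~\ref{pshk}.

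The critical point is that $\phi_s$ also alters every $\rho^{(s-1)}_{k+s'}$ with $s'>s$ through nonlinear Taylor corrections, and the induction requires $\rho^{(s)}_{k+s'}$ to remain in $\Ker\Delta_{k+s'}^{\sigma_{s'}}$. The elementary observation that $z^a\bar z^b\in\Ker\Delta^m$ iff $\min(a,b)<m$ lets me track Laplacian order by the $\bar z$-degree in the $z,\bar z$-basis. When $\sigma_s=s+1$, the condition $\rho^{(s-1)}_{k+s}\in\Ker\Delta_{k+s}^{s+1}$ forces the $z^{k-1}\bar z^{s+1}$-coefficient to vanish, and the uniqueness of $w_{s+1}$ then gives it $\bar z$-degree at most $s$; when $\sigma_s=s+2$ I can only guarantee $\bar z$-degree at most $s+1$. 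A direct computation of the multilinear term $\Re[z^{k-n}w_{s_1+1}\cdots w_{s_n+1}]$ (the source of all corrections at degree $k+s_1+\cdots+s_n$) shows that its $\bar z$-degree is at most $\sigma_{s_1+\cdots+s_n}-1$, so the correction stays inside the required kernel. Configurations in which every $s_i$ lies in the large regime have $s_1+\cdots+s_n\ge k-3$, so their corrections fall harmlessly into the unconstrained $R_{2k-3}$.

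The main obstacle will be this case-by-case bookkeeping at the break point $s=(k-3)/2$: I must verify the $\bar z$-degree inequality for every partition $s'=s_1+\cdots+s_n\le k-4$ in every combination of small/large regimes, and simultaneously handle the additional mixed contributions produced when composing non-holomorphic diffeomorphisms (where $\partial_{\bar z}$-corrections reintroduce conjugate factors $\bar w_{s+1}$). Confirming that the definition of $\sigma_s$ is tight in exactly the right way --- loose enough for linear solvability at each step, strict enough to control every subsequent nonlinear contribution --- is the heart of the argument. Once this is established, composition of all $\phi_s$ reduces the perturbation to an element of $\m_2^{2k-2}$, absorbed by the $(2k-3)$-determinacy of $f_k$, and the desired right equivalence follows.
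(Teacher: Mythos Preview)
Your strategy is sound and would work, but it takes a harder road than the paper does, and the ``main obstacle'' you flag is precisely the part the paper sidesteps.

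The paper splits the argument in two. For the large-$s$ range ($\sigma_s=s+2$) it proceeds exactly as you do: additive diffeomorphisms $\phi_{s+1}=(x+u_{s+1},y+v_{s+1})$ with $u_{s+1},v_{s+1}\in P_{s+1}$, using $P_{s+1}\Harm_{k-1}(\RR^2)=\Ker\Delta_{k+s}^{s+2}$ to hit $\rho_{k+s}$ (Lemma~\ref{high_deg-main}). For the small-$s$ range ($\sigma_s=s+1$), however, the paper does \emph{not} continue the additive scheme. Instead it observes that $\Ker\Delta_{k+s}^{s+1}=P_s\Harm_k(\RR^2)$, so the whole block $\sum_{s<s_0}\rho_{k+s}$ can be written as $uf_k+vg_k$ with $u,v\in\m_2$, and then the single \emph{multiplicative} change $\phi(z)=z\sqrt[k]{1+u-iv}$ gives $f_k\circ\phi = f_k+uf_k+vg_k$ \emph{exactly}, with no higher-order Taylor remainder whatsoever (Lemma~\ref{lough-main1}). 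This eliminates at a stroke all of the $\bar z$-degree bookkeeping you are preparing to do in the small-$s$ regime; the only nonlinear corrections that survive are those generated by the large-$s$ steps, and since any two large indices already sum to at least $k-3$, those corrections land in $\m_2^{2k-3}$ and are absorbed by determinacy.

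A second, smaller difference: the paper proves $(2k-4)$-determinacy of $h_k+R_{k+1}$ directly (Lemma~\ref{determined}), so the entire $R_{2k-3}\in\m_2^{2k-3}$ is disposed of at the outset and there is no need for your extra step at degree $2k-3$.

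So your plan is not wrong, but the inductive $\bar z$-degree control across the small/large boundary --- which you correctly identify as the crux --- is work the paper never has to do. If you want to keep a unified additive scheme, your outline is the right one and the estimates you sketch do close; if you are willing to change tack, the multiplicative diffeomorphism of Lemma~\ref{lough-main1} is the key simplification you are missing.
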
 % }}}

% proof of main {{{
For proving \theoname \ \ref{main}, we prepare some \lemname{s}.

\begin{Lem} \label{determined}
Let $k$ be a natural number.
Furthermore, let $h_k$ be a non-zero homogeneous harmonic polynomial with degree $k$
and $R_{k+1} \in \m_2^{k+1}$.
Then, $h_k + R_{k+1}$ is $\max (k, 2k-4)$-determined.
\end{Lem}
\begin{proof}
If $k \le 4$, we proved that $h_k$ is $k$-determined(see \propname \ \cite{mine}).
We suppose that $k \ge 5$.

We can take $h_k = f_k$ without loss of generality (see \ref{classify-harm}).

So,
\begin{align*}
J_{f_k + R_{k+1}}
  &= \langle kf_{k-1} + \frac{\del R_{k+1}}{\del x},
    -kg_{k-1} + \frac{\del R_{k+1}}{\del y} \rangle_{\E_2}. \\
\end{align*}
Furthermore, by \propname \ \ref{pshk}, we have
$\m_2^{2k-3} \subset \m_2 J_{f_k + R_{k+1}} + \m_2^{2k-2}$.
Thus, by \propname \ \ref{pshk}, we have $f_k + R_{k+1}$ is $(2k-3)$-determined.

Let $u, v$ be two elements in $P_{k-2}$.
Furthermore, we define $\phi\colon (\RR^2, \zero) \to (\RR^2, \zero)$ be a local diffeomorphism as
\[
  \phi(x, y) = (x + u, y + v).
\]
Then, we have
\[
  (f_k + R_{k+1}) \circ \phi
    \sim_{j^{2k-3}} f_k + R_{k+1} + k(uf_{k-1} - vg_{k-1}).
\]
Also, $\set{k(uf_{k-1}) - vg_{k-1}; u, v \in P_{k-2}} = P_{k-2}\Harm_{k-1}(\RR^2) = P_{2k-3}$.
Thus, we have the \propname.
\end{proof}

Following \lemname \ is used for $s < \frac{k-3}{2}$.

\begin{Lem} \label{lough-main1} % {{{
Let $k$ be a natural number more than or equal to 5 and let $h_k$ be a
  non-zero homogeneous harmonic polynomial with degree $k$.

Furthermore, for all $s \in \set{1, 2, \ldots, k-4}$, we take $\rho_{k+s} \in \Ker
  \Delta_{k+s}^{s + 1}$.

Then, $h_k$ and $h_k + \sum_{s=1}^{k-4}\rho_{k+s}$ are right equivalent.
\end{Lem}
\begin{proof}
We can take $h_k = f_k$ without loss of generality (see \propname \ \ref{classify-harm}).

Furthermore, by \propname \ \ref{pshk}, for all $s \in \set{1, 2, \ldots, k-4}$, $\rho_{k+s} \in \Ker \Delta_{k+s}^{s+1}
  = P_s\Harm_k(\RR^2)$, there exists $u, v \in \m_2$ such that $\sum_{s=1}^{k-4}\rho_{k+s}
    = uf_k + vg_k$.

Then, we identify $(\RR^2, \zero)$ and $(\CC, 0)$ and we define a local diffeomorphism $\phi\colon (\RR^2, \zero) \to
  (\RR^2, \zero)$ as
\[
  \phi(z) = z\sqrt[k]{1 + u -iv}.
\]
Then, we have $f_k \circ \phi = f_k + uf_k + vg_k$.
\end{proof} % }}}

Following \lemname \ is used for $s \ge \frac{k-3}{2}$.

\begin{Lem} \label{high_deg-main} % {{{
Let $k$ be a natural number more than or equal to 5 and let $h_k$ be a
  non-zero homogeneous harmonic polynomial with degree $k$.

For all $s \in \set{1, 2, \ldots, k-4}$, we take $\rho_{k+s} \in \Ker \Delta_{k+2}^{s+2}$.
Then, there exists $u, v \in P_{s+1}$ such that when we define a local diffeomorphism $\phi\colon
  (\RR^2, \zero) \to (\RR^2, \zero)$ as $\phi(x, y) = (x + u, y + v)$, then
  $h_k \circ \phi \sim_{j^{k+s}} h_k + \rho_{k+s}$.
\end{Lem}
\begin{proof}
We can take $h_k = f_k$ without loss of generality (see \propname \ref{classify-harm}).

Then,
\begin{align*}
f_k \circ \phi
  &= \Re (x + iy + u + iv)^k \\
  &\sim_{j^{k+s}} \Re (x + iy)^k + k \Re(x + iy)^{k-1} (u + iv) \\
  &= f_k + k(uf_{k-1} - vg_{k-1}).
\end{align*}
Since \propname \ \ref{pshk}, $\set{k(uf_{k-1} - vg_{k-1}); u, v \in P_s} = P_{s+1}\Harm_{k-1}
  = \Ker \Delta_{s+k}^{s+2}$.

Thus, we obtain the statement.
\end{proof} % }}}

\begin{proof}[Proof of \theoname \ \ref{main}]
By \lemname \ \ref{determined}, we can take $R_{2k-3} = 0$
  without loss of generality.

By \lemname \ \ref{high_deg-main}, for all natural number $s \ge \frac{k-3}{2}$,
  there exist $u_{s+1}, v_{s+1} \in P_{s+1}$ such that if we define a local diffeomorphism
  $\phi_{s+1}\colon (\RR^2, \zero) \to (\RR^2, \zero)$ as $\phi_{s+1}(x, y) = (x + u_{s+1}, y +
    v_{s+1})$, then $h_k \circ \phi_{s+1} \sim_{j^{2k-4}} h_k - \rho_{k+s}$.
Let $s_0$ be a minimum natural number such that $s \ge \frac{k-3}{2}$.
Then, we have
\[
  (h_k + \sum_{s=1}^{k-4}\rho_{k+s}) \circ \phi_{s_0 + 1} \circ \phi_{s_0+2} \circ \cdots \circ
    \phi_{k-3} \sim_{j^{2k-4}} h_k + \sum_{s=1}^{s_0-1}\rho_{k+s}.
\]

Finally, by \lemname \ \ref{lough-main1}, $h_k + \sum_{s=1}^{s_0-1} \rho_{k+s}$ and $h_k$ are right
equivalent.
\end{proof}
% }}}
% }}}

% "{{{ bibliography

% "}}}
\end{document}